\documentclass[11pt,letterpaper]{amsart}
\usepackage{enumitem}
\usepackage{color}
\usepackage[backref,hidelinks]{hyperref}
\newtheorem{thm}{Theorem}[section]

\newtheorem{lem}[thm]{Lemma}
\newtheorem{prop}[thm]{Proposition}
\numberwithin{equation}{section}

\theoremstyle{definition}

\newtheorem{rem}[thm]{Remark}

\newcommand{\C}{\mathbb{C}}
\newcommand{\Z}{\mathbb{Z}}

\newcommand{\A}{\mathbb{A}}

\newcommand{\K}{\mathbb{K}}
\newcommand{\m}{\mathfrak{m}}

\begin{document}

\title[]{Non-existence of negative weight derivations on Jacobian algebras}%
\author{Bingyi Chen}
\address{Bingyi Chen, Department of Mathematics, Sun Yat-sen University,\\
Guangzhou, 510275, P. R. China}
\email{chenby253@mail.sysu.edu.cn}

\author{Zhiwen Liu}
\address{Zhiwen Liu, Beijing Institute of Mathematical Sciences and Applications, Beijing, 101408, P. R. China.}
\email{liuzhiwen@bimsa.cn}

\author{Stephen S.-T.\ Yau} 
\address{Stephen S.-T.\ Yau, Department of Mathematical Sciences,
	Tsinghua University,
	Beijing, 100084, P. R. China}
\email{yau@uic.edu}

\author{Huaiqing Zuo}
\address{Huaiqing Zuo, Department of Mathematical Sciences,
	Tsinghua University,
	Beijing, 100084, P. R. China}
\email{hqzuo@mail.tsinghua.edu.cn}

\thanks{B. Chen was supported by NSFC Grant (No. 12501060) and the start-up fund
from Sun Yat-sen University. S. S.-T. Yau was supported by the Tsinghua University Education Foundation. H. Zuo was
supported by NSFC Grant (No. 12671056) and BJNSF Grant (No. 1252009).}

\keywords{}%

\begin{abstract}
Let $S=\K[x_1,\cdots,x_n]$ be a polynomial ring in $n$ variables with positive integer weights over a field $\K$ of characteristic zero. Let $f\in S$ be a weighted homogeneous polynomial such that the Jacobian algebra 
$$A=\left.S  \middle/  \left(\frac{\partial f}{\partial x_1},\cdots,\frac{\partial f}{\partial x_n}\right)\right.$$
is a finite-dimensional $\K$-vector space. We prove that $A$ has no nonzero negative weight derivation, which confirms a conjecture of the third author. An immediate consequence of this conjecture is the micro-local characterization of quasi-homogeneous singularities as shown in \cite{XY}.
\end{abstract}
\maketitle

\section{Introduction}
Let $(V,0)$ be a germ at the origin of a hypersurface in $\C^n$. Let $I(V)$ be the ideal of functions in $\C\{x_1,\cdots,x_n\}$ vanishing on $V$ and let $f$ be a generator of $I(V)$. It is well known that $0$ is an isolated singular point of $V$ if and only if  $$
 A(V):=\left.\C\{x_1,\cdots,x_n\}  \middle/  \left(f,\frac{\partial f}{\partial x_1},\cdots,\frac{\partial f}{\partial x_n}\right)\right.
$$
is a finite-dimensional $\C$-vector space.  Equipped with its natural $\C$-algebra structure, $A(V)$ is called the moduli algebra of $(V,0)$ (also known as the Tjurina algebra). In 1982, Mather and the third author \cite{MY} proved that two isolated hypersurface singularities of the same dimension are analytically isomorphic if and only if their moduli algebras are isomorphic. In 1983, the third author \cite{Yau1} introduced a finite-dimensional Lie algebra $L(V)$ to an isolated hypersurface singularity $(V,0)$, which is defined to be the Lie algebra of derivations of $A(V)$. In \cite{Yau2}, the third author  showed that $L(V)$ is actually a solvable Lie algebra. In \cite{SY}, Seeley and the third author used this Lie algebra structure to distinguish the analytic structures of a family of isolated hypersurface singularities. It is natural to ask how the algebra structures of $A(V)$ and $L(V)$ reflect the geometric structure of the singularity $(V,0)$.

Recall that an isolated hypersurface singularity $(V,0)\subseteq (\C^n,0)$ is called quasi-homogeneous if and only if 
$$f\in \left(\frac{\partial f}{\partial x_1},\cdots,\frac{\partial f}{\partial x_n}\right),$$
where $f$ is a generator of the ideal $I(V)$ of functions in $\C\{x_1,\cdots,x_n\}$ vanishing on $V$. By a theorem of Saito \cite{Sai}, if $(V,0)$ is quasi-homogeneous, after a biholomorphic change of coordinates, $f$ becomes a weighted homogeneous polynomial. In this case, the moduli algebra $A(V)$ is isomorphic to the Jacobian algebra
 $$
\left.\C[x_1,\cdots,x_n]  \middle/  \left(\frac{\partial f}{\partial x_1},\cdots,\frac{\partial f}{\partial x_n}\right)\right.
$$
and there is a grading on $A(V)=\bigoplus_{i\geq 0} A_i$. This grading induces a natural grading on the Lie algebra $L(V)=\bigoplus_{i\in \Z} L_i$ of derivations of $A(V)$, where $L_k$ consists of derivations that send $A_i$ to $A_{i+k}$. The third author conjectured that $L_k=0$ for all $k<0$, that is to say, there is no nonzero negative weight derivation on the moduli algebra of a quasi-homogeneous isolated hypersurface singularity. This conjecture is called Yau's conjecture. Assuming Yau's conjecture is true, Xu and the third author (\cite[Theorem B]{XY}) gave a micro-local characterization of quasi-homogeneity for isolated hypersurface singularities, only using the Lie algebra of derivations of the moduli algebra. Yau's conjecture has been confirmed in several special cases:
\begin{itemize}
  \item $n=2,3$ (\cite[Theorems 2.2 and 2.3]{CXY});
  \item $n=4$ (\cite[Theorem 2.1]{Che1});
  \item $w_n\geq w_1/2$, where $w_1$ (resp. $w_n$) is the largest (resp. smallest) weight of the variables (\cite[Main Theorem]{YZ}); 
  \item $\deg f>w_1^2-w_1+1$, where $w_1$ is the largest weight  of the variables  (\cite[Theorem 1.5]{CY}).
\end{itemize}

The main result of this paper is a proof of Yau's conjecture. An immediate consequence of this result is the micro-local characterization of quasi-homogeneous singularities as shown in \cite[Theorem B]{XY}.

\begin{thm} \label{mthm}
Let $S=\K[x_1,\cdots,x_n]$ be a polynomial ring in $n$ variables with positive integer weights over a field $\K$ of characteristic zero. Let $f\in S$ be a weighted homogeneous polynomial such that
$$A=\left.S  \middle/  \left(\frac{\partial f}{\partial x_1},\cdots,\frac{\partial f}{\partial x_n}\right)\right.$$
is a finite-dimensional $\K$-vector space. Then there is no nonzero negative weight derivation on $A$.
\end{thm}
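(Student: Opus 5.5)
Write $J=(f_1,\dots,f_n)$ with $f_i=\partial f/\partial x_i$, let $w_i=\operatorname{wt}(x_i)$, $d=\deg f$, and order the weights $w_1\ge\cdots\ge w_n$. Since $A$ is finite-dimensional, $f_1,\dots,f_n$ form a homogeneous regular sequence, so $A$ is a graded Artinian complete intersection, hence Gorenstein. Its socle is one-dimensional, spanned by the Hessian class $\det(\partial^2 f/\partial x_i\partial x_j)$, in top degree $s=\sum_i (d-2w_i)$.

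A derivation $D$ of $A$ of weight $-k<0$ is determined by the values $D(x_i)=g_i\in A_{w_i-k}$. I will lift it to a weighted homogeneous vector field $\tilde D=\sum g_i\,\partial/\partial x_i$ on $S$ of degree $-k$. Such a vector field defines a derivation of $A$ exactly when $\tilde D(f_j)\in J$ for all $j$. The goal is to show that then $g_i\in J$ for all $i$.

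\textbf{Step 1: reduce to the socle.} Pick a weight $-k$ derivation $D\ne 0$ with $k$ maximal among nonzero negative-weight derivations. The commutator $[\,\cdot\,,x_j]$ acting as multiplication gives $x_jD$, a derivation of weight $w_j-k$. I will use Gorenstein duality: if $D(a)\neq 0$ for some $a$, then some $b$ has $bD(a)$ equal to a nonzero socle element. The plan is to show that $D$ must lower the degree of the socle generator, $D(\mathrm{Hess})\neq 0$ in degree $s-k$, or else that $D$ kills enough to force $D=0$.

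\textbf{Step 2: the key identity.} Set $E=\sum (w_i/d)x_i\partial_i$, the Euler field, which satisfies $E(f)=f$. Then $[\tilde D,E]=-(k/d)\tilde D$ up to the grading, and $\tilde D f\in S_{d-k}$. Applying $\partial_j$ to $\tilde D f$ gives
\begin{equation*}
\partial_j(\tilde Df)=\tilde D(f_j)+\sum_i(\partial_j g_i)f_i\in J .
\end{equation*}
Hence all partials of $h=\tilde D f$ lie in $J$, and by Euler $h=\tfrac1{d-k}\sum w_ix_i\partial_ih\in \m J$. So I can modify: write $h=\sum c_if_i$ with $\deg c_i=w_i-k$, and replace $g_i$ by $g_i-c_i$, which gives the same derivation modulo $J$. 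This reduces to $\tilde D f=0$, i.e.\ $\tilde D$ is a negative-degree vector field annihilating $f$. The problem then becomes: show that every homogeneous negative-degree vector field in $\mathrm{Der}(-\log f)\cap\{\xi:\xi f=0\}$ has coefficients in $J$. The Koszul complex on $f_1,\dots,f_n$ is exact, since the $f_i$ form a regular sequence. The relation $\sum g_if_i=0$ therefore means $g=\Lambda\cdot\nabla f$ for an antisymmetric matrix $\Lambda$ with entries $\lambda_{ij}$ of degree $w_i+w_j-k-d$. In that case every $g_i=\sum_j\lambda_{ij}f_j$ lies in $J$ immediately, so $D=0$.

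\textbf{Main obstacle.} The delicate point is Step 2's modification. Replacing $g_i$ by $g_i-c_i$ requires $\tilde D(f_j)-\sum\ldots$ to be handled correctly, and $h\in J$ must be verified honestly. The Euler argument uses $\deg h=d-k>0$, which holds because $k<d$; if $k\ge d$, then $\deg g_i<0$ forces $g_i=0$ anyway, since $w_i<d$ always holds for isolated singularities with no linear terms. It must also hold in characteristic zero, which is where the hypothesis on $\K$ enters. I expect the real subtlety to be confirming that a derivation of $A$ lifts to $\tilde D$ with $\tilde D(f_j)\in J$ for every $j$, so that each $\partial_j h$ lies in $J$. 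If this naive argument turns out to break, I would fall back on the socle and duality approach of Step 1.
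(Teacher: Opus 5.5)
\textbf{There is a genuine gap: the step that makes $\tilde D f=0$ is invalid, and it is the heart of the theorem.}

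Your outer frame is the same as the paper's. You lift $\overline{D}$ to $\tilde D$ with $\tilde D(J)\subseteq J$ and set $h=\tilde Df=\sum_i g_if_i$. You get $J_h\subseteq J$ from $\partial_j h=\tilde D(f_j)+\sum_i(\partial_jg_i)f_i$. Once $h=0$, Koszul exactness finishes.

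The failure is in writing $h=\sum_i c_if_i$ and replacing $g_i$ by $g_i-c_i$. That gives the same derivation of $A$ only if every $c_i$ lies in $J$. Your Euler argument yields only $h\in\m J$, i.e.\ $c_i\in\m$, which carries no information. Indeed $h=\sum_i g_if_i$ by definition, so your argument permits $c_i=g_i$, which would ``reduce'' every lift to the zero field. To make the step legitimate you would need all $c_i\in J$, i.e.\ $h\in J^2$. That is equivalent to the theorem itself: $h\in J^2$ plus your Koszul step gives $g_i\in J$, and conversely $g_i\in J$ gives $h\in J^2$. What you flag as the main obstacle is routine: $h\in J$ is immediate, and lifting is the standard Lemma~\ref{lem:lift}. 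Step 1 states a plan but contains no argument.

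\textbf{The missing idea is a degree bound.} This is Proposition~\ref{prop:degree}: if $d\ge 2w_i$ for all $i$ and $g\neq 0$ is weighted homogeneous of degree $e>0$ with $J_g\subseteq J_f$, then $e\ge d$. Applied to $h$, which lies in $J\subseteq\m$ and has degree $d-k<d$, it forces $h=0$. The proof is genuinely geometric (Lemma~\ref{lem:flow}):
\begin{enumerate}
\item Assuming $e<d$, pick a partial $g_i\neq0$ with $w_i$ maximal.
\item Write $g_i=\sum_j a_jf_j$ to get a negative-weight field $V=\sum_j a_j\partial_j$ with $V(f)=g_i$. Here $V$ kills every variable of weight $\le w_i$, and $g_i$ involves only those variables.
\item Combine the flow $\exp(tV)$ with a weighted rescaling, letting $t$ depend on the scaling parameter. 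This builds a polynomial curve whose value at the origin of the parameter is a nonzero common zero of $\nabla f$, contradicting $\m^N\subseteq J_f$.
\end{enumerate}

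\textbf{You also need the splitting reduction.} The hypothesis $d\ge 2w_i$ cannot be dropped. Take $f=uv$ with $\deg u>d/2>\deg v$. Then $g=v^2$ satisfies $J_g=(2v)\subseteq J_f$ with $\deg g<d$. Moreover, the lift $\tilde D=v\,\partial_u$ of the zero derivation of $A=\K$ has $\tilde Df=v^2\neq0$. So before any argument that forces $\tilde Df=0$ can work, you need Lemma~\ref{lem:splitting}: write $f=\sum_i u_iv_i+f'(z)$ with $S/J_f\cong\K[z]/J_{f'}$, and pass to $f'$.
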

\begin{rem}
If $\K$ is of characteristic $p$ with $p>0$, Theorem \ref{mthm} does not hold. For example, let $S=\K[x]$, $f=x^{p+1}$ and $D=\partial/\partial x$. Then $D$ induces a nonzero negative weight derivation on the Jacobian algebra.
\end{rem}

Yau's conjecture is a special case of Halperin's conjecture, which is one of the most important open problems in rational homotopy theory. Let $A=\K[x_1,\cdots,x_n]/(f_1,\cdots,f_n)$ be a positively graded complete intersection Artinian algebra, where $\K$ is a field of characteristic zero. The grading on $A$ induces a natural grading on the Lie algebra of derivations of $A$. Halperin (\cite[p.516]{FHT}) conjectured that $A$ has no nonzero negative weight derivation. A positive answer to this conjecture implies the collapse of the rational Serre spectral sequence at the $E_2$ page for any orientable fibration whose fiber is an elliptic space and has no cohomology in odd degrees. Some partial results for Halperin's conjecture can be found in \cite{Tho}, \cite{ST}, \cite{Lup}, \cite{Mar}, \cite{AM}, \cite{PP}, \cite{Che2}, \cite{CYZ}, \cite{KW}. Theorem \ref{mthm} is also a partial result for Halperin's conjecture in the case when $f_i=\partial f/\partial x_i$ for a single polynomial $f$.

In a recent paper \cite{Ale}, Halperin's conjecture was claimed to be proved. However, the proof given there contains a gap. \footnote{The argument in \cite[\S8, pp. 27-28]{Ale} uses a
contraction-extension step that does not preserve zero-dimensionality, leaving its application of Corollary 7.6 unjustified. For example, let
$R=\C\{x\}$, $S=\C\{x,y\}$, and $I=(x^2,y^2)\subset S$.
Then $S/I$ is a zero-dimensional complete intersection algebra, whereas
$I^c=I\cap R=(x^2)$ and $I^{ce}=I^cS=(x^2)S$, so
$\dim(S/I^{ce})=1$.
Thus the extended ideal does not satisfy the
zero-dimensionality hypothesis required by Corollary 7.6.}
Our proof is quite different from that in \cite{Ale} and has been obtained independently.

\subsection*{AI disclosure} 
The initial proof of Theorem \ref{mthm} was obtained with the assistance of ChatGPT. The final proof in this paper has been generated and reviewed by the authors. ChatGPT was also used to assist with checking mathematical arguments and improving the exposition and language. The authors take full responsibility for the mathematical content, references, and final presentation of the paper.

\section{Preliminaries}
In this section, we recall some basic definitions, notation and facts about derivations, graded algebras and polynomials. We always assume that $\K$ is a field of characteristic zero. Let $S=\K[x_1,\cdots,x_n]$ be a polynomial ring in $n$ weighted variables $x_1,\cdots,x_n$ with positive integer weights $w_1,\cdots,w_n$. Denote by $\m$ the maximal ideal of $S$ generated by $x_1,\cdots,x_n$.

\subsection{Graded algebras and derivations}
Let $A$ be a commutative algebra over $\K$.  A derivation $D$ on $A$ is a $\K$-linear endomorphism of $A$ satisfying the Leibniz rule:
\begin{equation}
D(ab)=D(a)b+aD(b) \quad \text{for any } a,b\in A.
\end{equation}

We say a commutative algebra $A$ is a graded algebra if $A=\oplus_{i\geq 0} A_i$, where $A_i$ are linear subspaces of $A$ such that $A_i\cdot A_j\subseteq A_{i+j}$ for any $i,j\geq 0$. We set $A_j=0$ for $j<0$. If a derivation $D$ on a graded algebra $A=\oplus_{i\geq 0} A_i$ satisfies $D(A_i)\subseteq A_{i+k}$ for any $i\geq 0$, we say $D$ is of weight $k$.

\subsection{Weighted polynomial rings}
For any monomial $\prod_{i=1}^n x_i^{a_i}\in S$, its (weighted) degree is defined to be $\sum_{i=1}^n a_iw_i$. Denote by $S_i$ the linear subspace of $S$ spanned by monomials of degree $i$. 
Then $S=\oplus_{i\geq 0} S_i$ is a graded algebra. 

Let $f\in S$ be a polynomial. Denote by $J_f$ the ideal of $S$ generated by  $\partial f/\partial x_i$, $i=1,\cdots,n$, which is called the Jacobian ideal of $f$. If $f\in S_k$ for some $k$, we say $f$ is a weighted homogeneous polynomial of (weighted) degree $k$ and in the case when $f\neq 0$ we denote $k$ by $\deg f$. 

Every derivation $D$ on $S$ can be written as 
\begin{equation*}
D=p_1\partial/\partial {x_1}+\cdots+p_n \partial/\partial {x_n},
\end{equation*}
where $p_i\in S$.  Then $D$ is of weight $k$ if and only if $p_i$ is either the zero polynomial or a weighted homogeneous polynomial with $\deg p_i-w_i=k$ for $i=1,\cdots,n$.

\subsection{Lifting derivations}
Let $J$ be an ideal of $S$ generated by weighted homogeneous polynomials $f_1,\cdots,f_m$. Then $S/J=\oplus_{i\geq 0} (S/J)_i$ is a graded algebra, where $(S/J)_i$ is the image of $S_{i}$ under the projection $S\rightarrow S/J$.  The following well-known lemma tells us that every derivation on $S/J$ of weight $k$ can be lifted to a derivation on $S$ of weight $k$.

\begin{lem}[{\cite[Lemma 2.1]{CY}}]\label{lem:lift}
Let $\overline{D}$ be a derivation on $S/J$ of weight $k$. Then there exists a derivation $D$ on $S$ of weight $k$ such that

\begin{enumerate}
\item[\textup{(1)}] $\overline{D(g)}=\overline{D}\big(\overline{g}\big)$ for any $g\in S$;
\item[\textup{(2)}] $D(J)\subseteq J$.
\end{enumerate}
Here we denote by $\overline{g}$ the image of $g$ under the projection $S\rightarrow S/J$ for any $g\in S$.
\end{lem}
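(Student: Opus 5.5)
The plan is to construct $D$ explicitly by lifting the images of the generators $\overline{x_1},\cdots,\overline{x_n}$, and then to check (1) by a uniqueness argument for derivations on polynomial rings. Condition (2) then follows formally from (1).

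First, since $\overline{D}$ has weight $k$ and $\overline{x_i}\in (S/J)_{w_i}$, we have $\overline{D}(\overline{x_i})\in (S/J)_{w_i+k}$. If $w_i+k<0$, this space is zero by convention, and we set $p_i=0$. Otherwise, $(S/J)_{w_i+k}$ is by definition the image of $S_{w_i+k}$ under the projection. So we can choose $p_i\in S_{w_i+k}$ with $\overline{p_i}=\overline{D}(\overline{x_i})$; if this element is zero we take $p_i=0$. Define
$$D=p_1\partial/\partial x_1+\cdots+p_n\partial/\partial x_n .$$
By the description of weighted derivations in the previous subsection, each $p_i$ is zero or weighted homogeneous of degree $w_i+k$, so $D$ has weight $k$.

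For (1), let $\pi:S\to S/J$ be the projection and consider the two $\K$-linear maps $\pi\circ D$ and $\overline{D}\circ\pi$ from $S$ to $S/J$. Both satisfy the Leibniz rule $\delta(gh)=\delta(g)\pi(h)+\pi(g)\delta(h)$:
\begin{itemize}
\item for $\pi\circ D$ this holds because $D$ is a derivation and $\pi$ is a ring homomorphism;
\item for $\overline{D}\circ\pi$ it holds because $\overline{D}$ is a derivation and $\pi$ is multiplicative.
\end{itemize}
Both maps kill $1$, since any derivation kills $1$ (from $D(1)=2D(1)$). They also agree on each $x_i$, because $\pi(D(x_i))=\overline{p_i}=\overline{D}(\overline{x_i})$. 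An induction on the total degree of monomials, using the Leibniz rule, shows that they agree on every monomial. By linearity they agree on all of $S$, which is (1).

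For (2), take $g\in J$. By (1), $\overline{D(g)}=\overline{D}(\overline{g})=\overline{D}(0)=0$, so $D(g)\in J$.

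There is no real obstacle here. The only points needing care are that the degree bookkeeping lets us choose each $p_i$ homogeneous of the right degree, including the case $w_i+k<0$, and that the uniqueness of a derivation with prescribed values on generators is applied to maps into the $S$-module $S/J$ rather than into $S$ itself.
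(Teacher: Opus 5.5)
Your proof is correct and complete. You choose homogeneous lifts $p_i\in S_{w_i+k}$ of $\overline{D}(\overline{x_i})$, taking $p_i=0$ when $w_i+k<0$, and verify (1) by observing that $\pi\circ D$ and $\overline{D}\circ\pi$ are both derivations $S\to S/J$ agreeing on the $x_i$; (2) then follows from (1). The paper gives no proof of its own, citing the lemma as well known from \cite[Lemma 2.1]{CY}, and your argument is the standard one.
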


\subsection{\texorpdfstring{$\m$}{m}-primary ideals}

\begin{lem}\label{lem:primary}
Let $J$ be an ideal of $S$ generated by some weighted homogeneous polynomials $f_1,\cdots,f_m$. Assume that $S/J$ is a finite-dimensional $\K$-vector space. Then there is an integer $N>0$ such that $\m^N\subseteq J$. 
\end{lem}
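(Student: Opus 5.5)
The plan is to show that each variable $x_i$ is nilpotent in $A':=S/J$ and then use the fact that the generators commute. Write $\bar x_i$ for the image of $x_i$ in $S/J$.

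First, fix $i$. The powers $\bar x_i^0,\bar x_i^1,\bar x_i^2,\dots$ lie in the finite-dimensional $\K$-vector space $S/J$. Hence they are linearly dependent, and there is a nonzero polynomial $p(t)\in\K[t]$ with $p(x_i)\in J$.

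Next, use the grading. Since $J$ is generated by the weighted homogeneous polynomials $f_1,\cdots,f_m$, it is a homogeneous ideal: a polynomial lies in $J$ if and only if each of its weighted homogeneous components does. Write $p(t)=\sum_j c_j t^j$ and pick some $j$ with $c_j\neq 0$. The monomial $c_j x_i^{j}$ is the component of $p(x_i)$ of degree $jw_i$. This works because $w_i>0$, so distinct powers of $x_i$ have distinct degrees. Therefore $x_i^{j}\in J$, and we set $N_i:=j$. We may take $N_i\geq 1$, since if $j=0$ then $1\in J$ and everything is trivial.

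Finally, put $N=\sum_i (N_i-1)+1$. Any monomial $x_1^{a_1}\cdots x_n^{a_n}$ with $\sum a_i\geq N$ has some $a_i\geq N_i$ by pigeonhole. Such a monomial is therefore divisible by $x_i^{N_i}\in J$, so it lies in $J$. The ideal $\m^N$ is spanned by exactly these monomials, hence $\m^N\subseteq J$.

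The only step with any content is the homogeneity argument in the second paragraph, which requires the weights to be positive. Everything else is routine.
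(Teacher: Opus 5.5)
Your proof is correct and is essentially the paper's argument. Both use positivity of the weights to separate the powers of $x_i$ by degree, so that a linear dependence among the $\bar x_i^{\,j}$ forces a single power of $x_i$ into $J$, and then both finish with the same pigeonhole step. The paper phrases this as a contradiction with the uniform exponent $N=\dim_{\K} S/J$, which yields $\m^{n(N-1)+1}\subseteq J$.
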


\begin{proof}
If $J=S$, the conclusion is immediate. So we may assume that $J\neq S$. Let $N$ be the dimension of $S/J$ as a $\K$-vector space. Then $N>0$ as $J\neq S$. Assume that $x_i^N\notin J$ for some $i$. Denote by $\overline{x_i}$ the image of $x_i$ under the projection $S\to S/J$. If 
$$\sum_{j=0}^N c_j \overline{x_i}^{j}=0$$
in $S/J$ for some $c_j\in \K$, then $c_j \overline{x_i}^{j}=0$ and hence $c_j=0$ for every $j$, since the elements $\overline{x_i}^{j}$, $0\leq j\leq N$, are nonzero and have distinct degrees
(as $S/J$ is a graded algebra, an element in $S/J$ is zero if and only if all its homogeneous components are zero).
So $\overline{x_i}^{0},\overline{x_i}^{1},\cdots,\overline{x_i}^{N}$ are linearly independent in $S/J$, a contradiction. Therefore, $x_i^N\in J$ for each $i$, which implies that $\m^{n(N-1)+1}\subseteq J$ by the pigeonhole principle.
\end{proof}

\subsection{Koszul complex}

\begin{lem}\label{lem:koszul}
Let $f_1,\cdots,f_n\in \mathfrak m \subseteq S$ be $n$ weighted homogeneous polynomials such that $S/(f_1,\cdots,f_n)$ is a finite-dimensional $\K$-vector space. If 
\begin{equation}\label{Keq1}
\sum_{i=1}^n p_i f_i=0
\end{equation}
where $p_1,\cdots,p_n\in S$,
then there are  $c_{ij}=-c_{ji}\in S$ such that
\begin{equation}\label{Keq2}
 p_i=\sum_{j=1}^n c_{ij}f_j\quad\text{for } i=1,\cdots,n.
\end{equation}
In particular, $p_i\in (f_1,\cdots,f_n)$ for every $i$.
\end{lem}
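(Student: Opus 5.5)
The plan is to show that $f_1,\cdots,f_n$ is a regular sequence in $S$. Then the Koszul complex $K_\bullet(f_1,\cdots,f_n;S)$ is acyclic in positive degrees, and exactness at $K_1$ is exactly the statement: a syzygy $(p_1,\cdots,p_n)\in K_1=S^n$ with $\partial_1(p)=\sum p_if_i=0$ is the image of some element $\sum_{i<j}c_{ij}e_i\wedge e_j\in K_2$. With the convention $c_{ji}=-c_{ij}$ and the appropriate sign normalization of $\partial_2(e_i\wedge e_j)=f_je_i-f_ie_j$, this gives $p_i=\sum_j c_{ij}f_j$. The "in particular" part is then immediate.

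To prove regularity, I would use the graded version of the standard local-algebra facts. By Lemma \ref{lem:primary}, $\m^N\subseteq(f_1,\cdots,f_n)$. Hence $(f_1,\cdots,f_n)$ is $\m$-primary, and the height of this ideal, equivalently $\dim S_\m-\dim S_\m/(f)S_\m$, equals $n$. In the Cohen--Macaulay local ring $S_\m$, a sequence of $n=\dim S_\m$ elements generating an ideal of height $n$ is a system of parameters, and hence a regular sequence.

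Localization is harmless here. Since all $f_i$ and $p_i$ may be decomposed into homogeneous components, we can reduce to homogeneous $p_i$ with $\deg p_i+\deg f_i$ constant. The Koszul homology $H_1$ is then a finitely generated graded $S$-module. If it vanishes after localizing at $\m$, it vanishes outright by the graded Nakayama lemma, because every nonzero graded module with positive weights has support containing $\m$. Alternatively, one can argue directly that a homogeneous sequence in $\m$ is regular in $S$ if and only if it is regular in $S_\m$.

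The main obstacle is only to make the citation of Cohen--Macaulayness and Koszul acyclicity clean in the weighted graded setting. If a more self-contained route were desired, I would instead prove regularity by induction. Suppose $f_1,\cdots,f_{k}$ is regular and $f_{k+1}$ were a zero divisor on $S/(f_1,\cdots,f_k)$. Then $f_{k+1}$ would lie in an associated prime $P$, which by unmixedness has height $k$. This would force $(f_1,\cdots,f_n)$ to lie in primes of height less than $n$ as one continues, contradicting $\m$-primality. This still uses the unmixedness theorem, so the citation route seems preferable.
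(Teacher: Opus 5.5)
Your proposal is correct and follows essentially the same route as the paper. Lemma \ref{lem:primary} makes $(f_1,\cdots,f_n)$ $\m$-primary, so $f_1,\cdots,f_n$ is a system of parameters, hence a regular sequence, in the Cohen--Macaulay local ring $S_{\m}$; the grading then transfers this from $S_{\m}$ back to $S$, and exactness of the Koszul complex at $K_1$ gives the skew-symmetric $c_{ij}$. The only difference is in the descent step: you kill the graded module $H_1(f_1,\cdots,f_n;S)$ using exactness of localization and graded Nakayama, whereas the paper checks directly, by a lowest-degree-component argument, that each $\overline{f_{r+1}}$ is a non-zero-divisor on $S/(f_1,\cdots,f_r)$.
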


\begin{proof}
By Lemma \ref{lem:primary}, there is an integer $N>0$ such that
\[
\m^N\subseteq (f_1,\cdots,f_n)\subseteq \m.
\]
Thus $f_1,\cdots,f_n$ form a system of parameters of the regular local ring $S_{\mathfrak m}$. Since \(S_{\mathfrak m}\) is Cohen-Macaulay, they form a regular sequence in $S_{\mathfrak m}$.

We claim that $f_1,\ldots,f_n$ form a regular sequence in $S$.
For each integer $r$ with $0\leq r\leq n-1$, let
$J=(f_1,\ldots,f_r)$, with the convention that $J=(0)$ when $r=0$.
It suffices to show that the image $\overline{f_{r+1}}$ of $f_{r+1}$ in $S/J$
is not a zero divisor. Suppose that $f_{r+1}g\in J$ for some $g\in S$. As $f_1,\cdots,f_n$ form a regular sequence in $S_{\m}$, $f_{r+1}$ is not a zero divisor in $S_{\m}/J_{\m}$, so $g\in J_{\m}$, i.e. there are $a_i\in S,b\in S\setminus {\m}$ such that
$$bg=\sum_{i=1}^{r} a_if_i.$$
So $\overline{b}\overline{g}=0$ in $S/J$, where $\overline{b},\overline{g}$ are the images of $b,g$ under the projection $S\to S/J$. Note that $S/J$ is a graded algebra.  If $\overline{g}\neq 0$, let $\overline{g}_d$ be its nonzero homogeneous component of least degree. As $b\in S\setminus {\m}$, $b$ has a nonzero constant term $k$. So the homogeneous component of $\overline{b}\overline{g}$ of least degree is $k\overline{g}_d\neq 0$, which implies that $\overline{b}\overline{g}\neq 0$, a contradiction. Thus $\overline{g}=0$ and $\overline{f_{r+1}}$ is not a zero divisor in $S/J$. Therefore, $f_1,\cdots,f_n$ form a regular sequence in $S$.

So the Koszul complex on $f_1,\cdots,f_n$ is exact in positive homological degrees. Equation \eqref{Keq1} gives that $(p_1,\cdots,p_n)\in S^n$ is a cycle in homological degree one, so it is a Koszul boundary, giving the skew-symmetric formula \eqref{Keq2}; see \cite[Section 17.4]{Eis}.
\end{proof}

\subsection{Flows of negative weight derivations}\label{sec:flow}
Let $D$ be a  derivation on $S$ of weight $-\rho<0$. 
Then $D$ is locally nilpotent. More precisely, if $g\in S$ is a weighted homogeneous polynomial of degree $m$, then
$$
 D^N(g)=0\quad\text{ if }N\rho>m.
$$
For any $t\in \K$, the exponential
$$
\exp(tD)=\sum_{\ell\geq 0}\frac{t^\ell D^\ell}{\ell!}
$$
defines a polynomial automorphism of $S$, with inverse $\exp(-tD)$ (as $D$ is locally nilpotent, the sum is finite on each polynomial, so it is well-defined). The induced automorphism $\Phi_{D,t}$ of $\A^n_{\K}$ is given by 
$$(\Phi_{D,t})_i(a)=\exp(tD)(x_i)(a)$$
for any $a\in \A^n_{\K}$ and $i=1,\cdots,n$, which is called the flow of $D$.

\section{Proof of the main result}
Throughout this section, we retain the notation in Section 2. 

First we reduce Theorem \ref{mthm} to the case where $\deg f\geq 2w_i$ for every $i=1,\cdots,n$. Analogous techniques were used in \cite[proof of Satz 1.3]{Sai}.

\begin{lem}\label{lem:splitting}
Let $f\in S$ be a weighted homogeneous polynomial of degree $d$ such that $J_f\subseteq \m$ and $S/J_f$ is a finite-dimensional $\K$-vector space. Then after a graded polynomial change of coordinates,
\begin{equation}\label{eq:splitting}
    f = u_1v_1 + \cdots + u_rv_r + h(z_1,\ldots,z_m),
\end{equation}
where
$$
\deg u_i > \frac{d}{2}, \quad
\deg v_i = d - \deg u_i < \frac{d}{2}, \quad
\deg z_j \leq \frac{d}{2}.
$$
Moreover, there is a graded
algebra isomorphism
$$
   S/J_f \cong  \K[z_1,\ldots,z_m]/J_h.
$$
If $m=0$, then $S/J_f \simeq\K$.
\end{lem}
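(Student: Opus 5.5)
We proceed by induction on the number of variables of weight $>d/2$, in the spirit of Saito's splitting lemma; the key is a graded Morse lemma. Write $f$ in the original graded coordinates. A variable $x_i$ with $w_i>d/2$ cannot appear quadratically, since $2w_i>d$. So $f$ is at most linear in the set $U$ of variables of weight $>d/2$:
\[
f=\sum_{x_i\in U} x_i g_i + h_0,
\]
where $g_i$ and $h_0$ involve only variables of weight $<d/2$ and $\le d/2$ respectively. Note that $\deg g_i=d-w_i<d/2$.

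\textbf{Key step: each $g_i$ is part of a coordinate system.} Suppose $x_i\in U$. Since $S/J_f$ is finite-dimensional, Lemma \ref{lem:primary} shows that some power of $x_i$ lies in $J_f$. In degree $w_i$ this should force $x_i$ to occur linearly in some partial derivative $\partial f/\partial x_j$, which requires $x_ix_j$ to appear in $f$ with $w_j=d-w_i$. I would instead argue directly. Set all variables of weight $\neq d-w_i$ and $\ne w_i$ aside and look at the linear parts. If the linear parts of the $g_i$ (over $x_i\in U$) were linearly dependent, we could change coordinates within $U$ (a graded linear change among same-degree variables) to arrange that some $g_{i_0}$ has no linear term. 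Then $g_{i_0}\in\m^2$, and setting all variables except $x_{i_0}$ to zero would contradict finiteness: the ideal $J_f$ restricted to the $x_{i_0}$-axis would vanish. More precisely, the ideal $J_f+(\text{other variables})$ would not contain a power of $x_{i_0}$, since every partial derivative lies in $\m^2+(\text{other vars})$ or involves no pure $x_{i_0}$ power. I expect this nondegeneracy argument to be the main obstacle; it needs care with mixed weights, so I would carry it out degree by degree, working with the linear parts of the $g_i$ among variables of degree $d-w_i$.

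\textbf{Changing coordinates.} Granting that the linear parts of the $g_i$ are independent variables of weight $<d/2$, I set $v_i:=g_i$ and complete them to graded coordinates. This is a graded automorphism because each $g_i$ equals a new variable plus higher-order terms in lighter variables. Write $u_i:=x_i$. Then
\[
f=\sum u_iv_i+h_0(v,z),
\]
where the $z$ are the remaining variables, of weight $\le d/2$. Next expand $h_0=h_0(0,z)+\sum_i v_i q_i(v,z)$. The substitution $u_i\mapsto u_i-q_i$ is graded, since $\deg q_i=d-\deg v_i=\deg u_i$, and it absorbs these terms, giving \eqref{eq:splitting} with $h=h_0(0,z)$.

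\textbf{The isomorphism.} We have $\partial f/\partial u_i=v_i$, and $\partial f/\partial v_i=u_i$, and $\partial f/\partial z_j=\partial h/\partial z_j$. Hence $J_f=(u,v,J_h)$, so
\[
S/J_f\cong \K[z]/J_h
\]
as graded algebras. If $m=0$, then $J_f=\m$ and $S/J_f\cong\K$.
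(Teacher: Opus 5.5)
Your proposal is correct and is essentially the paper's proof. It uses the same decomposition $f=\sum u_ig_i+h_0$, the same nondegeneracy argument (rotating the dependent combination $\sum c_ig_i$ onto a coordinate axis is exactly the paper's line $u_i=tc_i$), and the same two triangular coordinate changes. When you write out the axis step, state the criterion as $J_f\subseteq(\text{other variables})$, which follows from $g_k\in\m$ (by $J_f\subseteq\m$) and from $h_0$ having no linear term, since its variables have weight $\le d/2<d$; the criterion ``$\m^2+(\text{other vars})$'' is not the right one, since $x_{i_0}^2\in\m^2$.
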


\begin{proof}
Write $f_i=\partial f/\partial x_i$ for any $i=1,\cdots,n$. By Lemma \ref{lem:primary}, there is an integer $N>0$ such that $\m^N\subseteq (f_1,\cdots,f_n) \subseteq \m$. 

Call a variable high (resp. low) if its weight is greater than $d/2$ (resp. at most $d/2$).
Write the high variables as $u_1,\ldots,u_r$ and all other variables
as $y$. A high variable cannot have weight at least $d$: if its weight is greater than $d$, it is absent from $f$, and if its weight equals $d$, its partial derivative is either zero or a nonzero constant. Neither case is possible as  $\m^N\subseteq (f_1,\cdots,f_n) \subseteq \m$.

No monomial of degree $d$ can contain two high
variables, or a square of a high variable. Hence
\begin{equation*}
f = \sum_{i=1}^{r} u_i g_i(y) + h_0(y),
\end{equation*}
where each $g_i$ is weighted homogeneous of degree $d - \deg u_i\in (0,d/2)$.
For each $i$, let $\ell_i$ be the ordinary linear part of $g_i$; it is a linear combination of variables of weight $d-\deg u_i$. We claim that the $\ell_1,\cdots,\ell_r$ are linearly independent.
Otherwise, choose constants $c_i$, not all zero, with
$$ \sum_{i=1}^r c_i \ell_i = 0. $$
Consider the line on which $u_i = tc_i$ for $i=1,\cdots,r$ and every other coordinate is zero. The derivatives with respect to the high
variables vanish there because $g_i(0) = 0$. The derivative with
respect to a low variable $y_j$ on the line is
\[
    t \sum_{i=1}^r
        c_i \frac{\partial g_i}{\partial y_j}(0)
    + \frac{\partial h_0}{\partial y_j}(0)
    = 0.
\]
The first term vanishes by the chosen dependence, and the second
vanishes because $h_0$ is weighted homogeneous of degree $d$ while every low
variable has weight at most $d/2$, so $h_0$ has no ordinary linear
term. Thus all partial derivatives vanish on the whole line, contradicting the fact that $\m^N\subseteq (f_1,\cdots,f_n)$.

Within the span of the low variables of each fixed weight, extend the corresponding $\ell_i$ to a basis. This gives a graded linear change of coordinates in which the $\ell_i$ become distinct coordinate variables $v_i$. Denote the remaining low variables by $z$. Then
$$ g_i = v_i + r_i(v,z),$$
where $r_i$ has no ordinary linear term and is weighted homogeneous of degree $\deg v_i$. So every variable occurring in $r_i(v,z)$ has weight strictly less than $\deg v_i$. Thus the change of coordinates $v'_i=v_i+r_i(v,z)$ is triangular with respect to increasing weights; its inverse is polynomial by successive substitution. It is therefore a graded polynomial automorphism. Relabeling the new coordinates as $v_i$, we obtain
$$ f = \sum_{i=1}^{r} u_i v_i + h_1(v,z).$$
Put $h(z) = h_1(0,z)$. Since $h_1(v,z)-h(z)$ lies in
$(v_1,\ldots,v_r)$, write
$$ h_1(v,z) = h(z) + \sum_{i=1}^{r} v_i a_i(v,z), $$
where each $a_i$ is weighted homogeneous of degree $\deg u_i$.
The graded coordinate change $u'_i=u_i+a_i(v,z)$ gives \eqref{eq:splitting} after relabeling the new coordinates as $u_i$.

Now
$$J_f = (v_1,u_1,\ldots,v_r,u_r,J_h),$$
and hence
$$S/J_f \cong \K[z_1,\ldots,z_m]/J_h$$
as graded algebras.
\end{proof}

The following proposition is crucial in the proof of Theorem \ref{mthm}.

\begin{prop}\label{prop:degree}
Let $f\in S$ be a weighted homogeneous polynomial of degree  $d$ such that $d\geq 2w_i$ for any $i=1,\cdots,n$ and $S/J_f$ is a finite-dimensional $\K$-vector space. Let $g\in S$ be a nonzero weighted homogeneous polynomial of degree $e$ such that $e>0$ and $J_g \subseteq J_f$. Then $e\geq d$.
\end{prop}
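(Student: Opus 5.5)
The plan is to argue by contradiction: assume $0<e<d$ and exploit the weight constraints this forces on the relation $J_g\subseteq J_f$. Write $f_j=\partial f/\partial x_j$ and $g_i=\partial g/\partial x_i$. Since $J_g\subseteq J_f$, we can write
\[
g_i=\sum_{j=1}^n a_{ij}f_j ,
\]
and, taking weighted homogeneous components, we may assume $a_{ij}$ is zero or weighted homogeneous of degree $e-d+w_j-w_i$. Because $e<d$, $a_{ij}\neq 0$ forces $w_j\geq w_i+(d-e)>w_i$. So if we order the variables by weight, the matrix $A=(a_{ij})$ is strictly "weight-increasing", and in particular $A$ is nilpotent. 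By the Euler relation $e\,g=\sum_i w_ix_ig_i$, we also get $g\in\m J_f$.

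Next I would differentiate the relation once more and use the symmetry of second derivatives. We have $\partial g_i/\partial x_k=\sum_j (\partial a_{ij}/\partial x_k) f_j+\sum_j a_{ij}\,\partial f_j/\partial x_k$, so modulo $J_f$ the Hessian satisfies $H_g\equiv A H_f$. Since $H_g$ is symmetric, this gives
\[
A H_f\equiv H_f A^{T}\pmod{J_f}.
\]
Iterating, $A^kH_f\equiv H_f(A^T)^k$, and nilpotency of $A$ should let me push the information down to statements about $\det H_f$. It is standard (in characteristic zero) that the Hessian $\det H_f$ spans the socle of $A=S/J_f$, in degree $\sigma=\sum_i(d-2w_i)$. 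Here the hypothesis $d\geq 2w_i$ enters: every factor $d-2w_i$ is $\geq 0$, so no variable is "high" in the sense of Lemma \ref{lem:splitting}. I expect to use this to ensure that each $f_j$ is a genuinely non-linear or essential generator. It also ensures that the entries $\partial f_j/\partial x_k$, of degree $d-w_j-w_k\geq 0$, fit into a graded Gorenstein duality pairing $(S/J_f)_a\times(S/J_f)_{\sigma-a}\to\K$.

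Alternatively, and this is the route I would try first if the Hessian manipulation stalls, I would use a deformation/conservation argument. Consider $F_t=f+tg$. Then $J_{F_t}\subseteq J_f$, and by weighted rescaling $x_i\mapsto\lambda^{w_i}x_i$ the family $F_t$ is equivalent to $f+s g$ with $s=t\lambda^{e-d}\to 0$. Since $f$ has an isolated critical point and is weighted homogeneous, there are no critical points escaping to infinity, so the total colength $\dim S/J_{F_t}$ equals $\dim S/J_f$. Combined with $J_{F_t}\subseteq J_f$, this forces $J_{F_t}=J_f$ for all $t$. In particular the critical scheme of $f+tg$ is concentrated at $0$ and equals that of $f$. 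Comparing lowest-order parts at the origin, where $g$ dominates because $e<d$, together with $g\in\m J_f$ and the bound $e\geq d-\max w_i\geq d/2$, should yield a contradiction.

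The main obstacle is the final contradiction step. In either route I must show that a lower-degree $g$ with $J_g\subseteq J_f$ cannot exist, which means converting nilpotency of $A$ (or the equality $J_{f+tg}=J_f$) into a degree statement. That is precisely where $d\geq 2w_i$ has to be used, and the example of a high variable $u$ with $f=uv$ shows that this hypothesis cannot be dropped.
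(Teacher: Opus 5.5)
Your proposal has a genuine gap at the final step, which you flag yourself. It is more than a missing computation: the facts you establish cannot give a contradiction on their own. Take $f=xy$ and $g=y^2$ with $w_x=2$, $w_y=1$. Then $d=3$, $e=2$, and $J_g=(y)\subseteq(x,y)=J_f$. The conclusion fails here only because $d<2w_x$. Yet every ingredient you derive holds in this example:
\begin{itemize}
\item $g_y=2f_x$ gives $a_{yx}=2$ and no other entries, consistent with your bound $w_j\ge w_i+(d-e)$, so $A$ is nilpotent;
\item $g\in\m J_f$;
\item $AH_f=\mathrm{diag}(0,2)=H_g$ is symmetric;
\item $J_{f+tg}=(y,\,x+2ty)=J_f$ (your conservation-of-colength argument is correct);
\item even $e\ge d/2$ holds.
\end{itemize}
So neither the Hessian symmetry nor the equality $J_{f+tg}=J_f$ detects the hypothesis $d\ge 2w_i$. ``Comparing lowest-order parts'' adds nothing: the lowest-degree parts of the generators $f_k+tg_k$ of the homogeneous ideal $J_f$ are just the $g_k$, which you already know lie in $J_f$. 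The appeal to the socle and Gorenstein duality likewise comes with no mechanism that would produce a contradiction.

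The missing idea is to turn the ideal-level relation into an actual change of coordinates, and then use it to produce a nonzero critical point of $f$. The paper uses a single row of $A$. Choose $i$ with $g_i\neq0$ and $w_i$ maximal among such indices, and set $V=\sum_j a_{ij}\partial_{x_j}$. This is a derivation of weight $-\rho$, where $\rho=w_i+d-e>0$, and $V(f)=g_i$.

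Your weight bound shows $V(x_k)=0$ whenever $w_k\le w_i$. Maximality of $w_i$ makes $g$, hence $g_i$, depend only on those same variables. Therefore $V(g_i)=0$, and $f\circ\Phi_t=\exp(tV)(f)=f+tg_i$ exactly, where $\Phi_t$ is the flow of the locally nilpotent derivation $V$.

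The paper then combines this flow with weighted scaling. Set $N_j=\max\{\ell: V^\ell(x_j)\ne0\}$, $\lambda=\min_{N_j>0}w_j/N_j$ and $\alpha=\lambda-\rho\ge0$, and choose $c$ so that $c\lambda$ and $c\alpha$ are integers. Put $Q(\tau)=(\tau^{cw_1}a_1,\dots,\tau^{cw_n}a_n)$ and $t=\tau^{-c\alpha}$. For suitable $a$, the curve $P(\tau)=\Phi_t(Q(\tau))$ is polynomial with $P(0)\neq0$. The chain rule gives $\nabla f(P)=\nabla(f+tg_i)(Q)\cdot J\Phi_{-t}(P)$, and exponent counting then yields $\nabla f(P(0))=0$. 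This contradicts the isolatedness of the critical point of $f$.

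The hypothesis $d\ge 2w_j$ is used exactly to make the relevant exponents positive: $d-w_j-N_j\alpha$ when $w_j>w_i$, and $d-w_j-\lambda$ when $w_j\le w_i$. In the example above, the second exponent is $3-1-2=0$, and that is precisely how the example escapes.
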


\begin{rem}
Proposition \ref{prop:degree} appears as Assertion 1 in \cite{Sho}. However, the proof given there contains a gap. \footnote{
The prefactor in \cite[p.~132, equation (5)]{Sho} should be
$$
\frac{t^{n-(\alpha_1+\cdots+\alpha_n)}}
     {t^{n\lambda(\varphi)-(\alpha_1+\cdots+\alpha_n)}},
$$
rather than
$$
\frac{t^{n-(\alpha_1+\cdots+\alpha_n)}}
     {t^{m+k\lambda(\varphi)-(\alpha_1+\cdots+\alpha_n)}}.
$$
Indeed, under the weighted change of variables used there,
each denominator factor contributes
$t^{\lambda(\varphi)-\alpha_i}$, with the remaining factor
being $\varphi_i+t^pF_i$, where $p=1-\lambda(\varphi)$.
This also holds when $\varphi_i=0$.
Thus the corrected prefactor is $t^{np}$ rather than $t^{kp}$.
With this correction, the comparison of powers of $t$
no longer implies $k=n$.
Consequently, the argument given there does not establish
Assertion 1.
} We give a new proof here, which is quite different from that in \cite{Sho}.
\end{rem}
\begin{proof}
Suppose, for a contradiction, that $e<d$. Write $f_i=\partial f/\partial x_i$ and $g_i=\partial g/\partial x_i$ for any $i=1,\cdots,n$. As $\deg g=e>0$, some partial derivative of $g$ is nonzero. Choose an index $i$ such that $g_i\neq 0$ and $w_i\geq w_j$ for any index $j$ with $g_j\neq 0$. Set
$$u=w_i\quad \text{ and } \quad  r=d-e>0.$$
As $g_i\in J_g\subseteq J_f$, we can write
\begin{equation}\label{eq1}
 g_i=\sum_{j=1}^n a_j f_j,
\end{equation}
where each $a_j$ is either zero or weighted homogeneous with
\begin{equation}\label{eq2}
\deg a_j=(e-w_i)-(d-w_j)=w_j-u-r.
\end{equation}
Define a derivation
$$
 V=\sum_{j=1}^n a_j\partial_{x_j}.
$$
By \eqref{eq1}, we have
\begin{equation}\label{eqq}
 V(f)=g_i\neq 0.
\end{equation}
Thus $V$ is a nonzero derivation. By \eqref{eq2}, $V$ is of weight $-(u+r)<0$.

If $a_j\neq 0$, its degree is nonnegative and hence \eqref{eq2} gives 
\begin{equation}\label{eq3}
w_j\geq u+r>u.
\end{equation}
Therefore, for any index $k$ with $w_k\leq u$, $a_k=0$ and hence $V(x_k)=0$. As $V\neq 0$, at least one $a_j$ is nonzero, hence \eqref{eq3} gives
$$u<w_j\leq \frac{d}{2}.$$

By the choice of $u$, $g_j=0$ for any index $j$ with $w_j> u$.
In characteristic zero, this means that $g$ is independent of every
variable of weight greater than $u$. Its derivative $g_i$ is therefore also independent of these variables, so
$$V(f)=g_i\in\K[x_k:w_k\le u].$$
The above observations show that all hypotheses of Lemma \ref{lem:flow} below hold. That lemma gives $V=0$, contradicting \eqref{eqq}.
Therefore $e\geq d$.
\end{proof}

\begin{lem}\label{lem:flow}
Let $f\in S$ be a weighted homogeneous polynomial of degree $d$ such that $d\geq 2w_i$ for any $i=1,\cdots,n$ and $S/J_f$ is a finite-dimensional $\K$-vector space. Let $V$ be a derivation on $S$ of weight $-\rho<0$. Assume that, for some $u< d/2$,
\begin{enumerate}
\item[\textup{(1)}] $V(x_i)=0$ whenever $w_i\leq u$;
\item[\textup{(2)}] $V(f)\in \K[x_i:w_i\leq u]$.
\end{enumerate}
Then $V=0$.
\end{lem}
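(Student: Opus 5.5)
The plan is to integrate $V$ using the flow of Section \ref{sec:flow}, show that $h:=V(f)$ must vanish, and then use Lemma \ref{lem:koszul} together with the hypothesis $d\geq 2w_i$ to conclude $V=0$.

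\textbf{Step 1 (the flow is affine on $f$).} Set $h=V(f)$, which is zero or weighted homogeneous of degree $d-\rho$. By (1), $V$ kills every variable of weight $\leq u$. By (2), $h$ is a polynomial in those variables, so $V^2(f)=V(h)=0$. Hence
$$\exp(tV)(f)=f+th \quad\text{for all } t\in\K .$$
Moreover $\exp(tV)$ fixes each low variable $x_k$ with $w_k\leq u$, so it fixes $h$. Since $\exp(tV)$ is a polynomial automorphism, the chain rule gives an ideal identity
$$J_{f+th}=\exp(tV)(J_f).$$

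\textbf{Step 2 (show $h=0$).} By the Euler relation, $f\in J_f$, so $f+th=\exp(tV)(f)\in \exp(tV)(J_f)=J_{f+th}$. If $E=\sum_k w_kx_k\partial_{x_k}$ is the Euler field, then $E(f+th)=d f+(d-\rho)t h$ also lies in $J_{f+th}$. Subtracting $d(f+th)$ gives $\rho t h\in J_{f+th}=\exp(tV)(J_f)$. Applying $\exp(-tV)$, which fixes $h$, yields $h\in J_f$. This alone is not a contradiction, so I will bootstrap. Write $h=\sum_j b_jf_j$ with $\deg b_j=w_j-\rho$, and replace $V$ by $V-\sum_j b_j\partial_{x_j}$, or iterate the same argument on the one-parameter family $f+th$. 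The aim is to show that $h$ lies in $\m^k J_f$ for every $k$, so $h=0$ by degree. An alternative I would try in parallel uses the Jacobian algebra directly. The automorphism $\exp(tV)$ induces an isomorphism $S/J_f\cong S/J_{f+th}$. The socle (Hessian) degree of $S/J_f$ is $\sum_i(d-2w_i)$. Comparing a suitable graded invariant, e.g.\ the Hilbert function of the associated graded algebra, of $S/J_{f+th}$ with that of $S/J_f$ should force the lower-degree perturbation $h$ to be zero. \textbf{This step is the main obstacle}; it is where hypotheses (1) and (2) and $u<d/2$ must be used essentially.

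\textbf{Step 3 (conclude $V=0$).} Once $h=0$, write $V=\sum_j a_j\partial_{x_j}$, so $\sum_j a_jf_j=V(f)=0$. The $f_j$ lie in $\m$, since $2w_j\leq d$ rules out constant partials, and $S/J_f$ is finite-dimensional. So Lemma \ref{lem:koszul} gives $a_j=\sum_k c_{jk}f_k$ with $c_{jk}$ homogeneous of degree
$$(w_j-\rho)-(d-w_k)=w_j+w_k-d-\rho .$$
By the hypothesis $w_j,w_k\leq d/2$, this degree is at most $-\rho<0$. Hence every $c_{jk}=0$, so every $a_j=0$, and therefore $V=0$.
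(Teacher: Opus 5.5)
\textbf{There is a genuine gap: Step 2 never proves $h=0$, and that is the whole content of the lemma.}

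Your Steps 1 and 3 are correct. The identity $\exp(tV)(f)=f+th$ is exactly what the paper uses. The implication $h=0\Rightarrow V=0$, via Lemma~\ref{lem:koszul} and the count $\deg c_{jk}=w_j+w_k-d-\rho<0$, is valid. Given Step 3, however, the lemma is equivalent to $h=0$, and Step 2 contains no argument for it:
\begin{itemize}
\item The Euler-field computation only yields $h\in J_f$. This is tautological, since $h=V(f)=\sum_j a_jf_j$ by definition.
\item The suggested bootstrap is vacuous. By the same degree count as in Step 3, the $f_j$ admit no nonzero syzygy with coefficients of degrees $w_j-\rho$. So any representation $h=\sum_j b_jf_j$ has $b_j=a_j$, and $V-\sum_j b_j\partial_{x_j}=0$.
\item Comparing invariants of $S/J_f$ and $S/J_{f+th}$ cannot help either. $\exp(tV)$ is an algebra isomorphism between them, sending the maximal ideal at $0$ to the one at the unique critical point $\Phi_{V,-t}(0)$ of $f+th$. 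Also, the top-weight initial ideal of $J_{f+th}$ is $J_f$ itself.
\end{itemize}
More decisively, the only consequence of (1) and (2) your argument uses is $V(h)=0$, and that cannot suffice. Take $w_1=w_2=1$, $f=x_1x_2$ (so $d=2$ and $J_f=\m$) and $V=\partial_{x_2}$. Then $V^2(f)=0$ and every conclusion of your Steps 1--2 still holds, yet $h=x_1\neq0$. Here (2) fails, because $x_1$ has weight $d/2>u$. So the missing step must use that $h$ involves only variables of weight $\le u<d/2$ and that $V$ kills those variables; nothing in your sketch does.

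The paper supplies this with a degeneration argument that proves $V=0$ directly, by contradiction.
\begin{itemize}
\item Set $N_i=\max\{\ell:V^\ell(x_i)\neq0\}$, $\lambda=\min_{N_i>0}w_i/N_i$ and $\alpha=\lambda-\rho\ge0$.
\item Flow the weighted orbit $Q(\tau)=(\tau^{cw_1}a_1,\ldots,\tau^{cw_n}a_n)$ of a generic point $a$ for time $t(\tau)=\tau^{-c\alpha}$. The curve $P(\tau)=\Phi_{V,t(\tau)}(Q(\tau))$ is polynomial in $\tau$ with $P(0)\neq0$.
\item Differentiating $f=(f+th)\circ\Phi_{V,-t}$ gives $\nabla f(P(\tau))=\bigl(\nabla f(Q(\tau))+t(\tau)\nabla h(Q(\tau))\bigr)\,J\Phi_{V,-t(\tau)}(P(\tau))$.
\item The hypotheses enter exactly in the exponent count showing every term lies in $\tau\K[\tau]$. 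For $w_i>u$, (2) gives $h_i=0$ and $d\ge 2w_i$ gives $d-w_i-N_i\alpha>0$. For $w_i\le u$, (1) gives $N_i=0$ and $u<d/2$ gives $d-w_i-\lambda>0$.
\end{itemize}
Hence $P(0)$ is a nonzero common zero of the $f_i$, contradicting $\m^N\subseteq J_f$. Some input of this kind is what your proposal is missing.
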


\begin{proof}
By Lemma \ref{lem:primary}, there is a positive integer $N$ such that $\m^N\subseteq J_f$. Suppose, for a contradiction, that $V\neq 0$. We will construct a polynomial curve $P:\A^1_{\K}\to \A^n_{\K}$ by combining weighted scaling and the flow of $V$ such that
$P(0)$ is a common zero of all partial derivatives of $f$
and is distinct from the origin. This contradicts
$\mathfrak m^N\subseteq J_f$.

Set $h=V(f)$. Write $f_i=\partial f/\partial x_i$ and $h_i=\partial h/\partial x_i$ for $i=1,\cdots,n$. Set 
$$
 N_i=\max\{\ell\geq 0\mid V^\ell(x_i)\neq 0\}
$$
for $i=1,\cdots,n$. Thus $N_i=0$ exactly when $V(x_i)=0$. There is at least one $N_i\neq 0$ because $V\neq 0$. 
Set
\begin{equation*}
 \lambda=\min_{N_i>0}\frac{w_i}{N_i}\quad \text{ and }\quad 
 \alpha=\lambda-\rho.
\end{equation*}
Since $V^{N_i}(x_i)$ has degree $w_i-N_i\rho\geq 0$, one has $N_i\rho\le w_i$ for any $i$, hence $\alpha=\lambda-\rho\geq 0$.
Choose a positive integer $c$ such that both $c\alpha$ and $c\lambda$ are integers. Choose an index $i_0$ such that $\lambda=w_{i_0}/N_{i_0}$.  As $\K$ is infinite (because $\K$ has characteristic zero) and as the polynomial $V^{N_{i_0}}(x_{i_0})$ is nonzero, we can choose
$a=(a_1,\ldots,a_n)\in\A_{\K}^n$ outside its zero set. For any $0\neq \tau\in \K$, define
\begin{align*}
Q(\tau)&=(\tau^{cw_1}a_1,\ldots,\tau^{cw_n}a_n),\\
t(\tau)&=\tau^{-c\alpha} \quad \text{ and  }\quad 
P(\tau)=\Phi_{t(\tau)}(Q(\tau)),
\end{align*}
where $\Phi_{t}=\Phi_{V, t}$ is the flow of the negative weight derivation $V$ (see Subsection \ref{sec:flow}).

As $V^\ell(x_i)$ is weighted homogeneous of degree $w_i-\ell\rho$, we have
$$
 V^\ell(x_i)(Q(\tau))
 =\tau^{c(w_i-\ell\rho)}V^\ell(x_i)(a).
$$
It follows that 
\begin{equation}\label{neqP}
 P_i(\tau)=\sum_{\ell=0}^{N_i}
 \frac{\tau^{c(w_i-\ell\lambda)}}{\ell!}V^\ell(x_i)(a),
\end{equation}
where $P_i(\tau)$ is the $i$-th coordinate of $P(\tau)$. For any $0\leq \ell\leq N_i$,
$$
 w_i-\ell\lambda\ge w_i-N_i\lambda\geq 0.
$$
Thus every exponent in \eqref{neqP} is a nonnegative integer
and hence $P_i(\tau)\in\K[\tau]$.
In particular, $P$ extends to a polynomial map
$\A^1_{\K}\to \A^n_{\K}$. Set
$$P_0=P(0).$$
For $i=i_0$, the exponent in \eqref{neqP} is zero only when
$\ell=N_{i_0}$. So we have 
\begin{equation}\label{eqn0}
 (P_0)_{i_0}=\frac{V^{N_{i_0}}(x_{i_0})(a)}{N_{i_0}!}\ne0
\end{equation}
by the choice of $a$. Hence $P_0$ is distinct from the origin.

Conditions (1) and (2) imply that $V(h)=0$. It follows that
\begin{equation*}
f\circ\Phi_t=\exp(tV)(f)=f+th,
\end{equation*}
and hence
\begin{equation*}
f=(f+th)\circ \Phi_{-t},
\end{equation*}
as $\Phi_{-t}$ is the inverse of $\Phi_t$. Write $\nabla f=(f_1,\ldots,f_n)$ as a row vector and let
\[
 K(\tau)=J\Phi_{-t(\tau)}(P(\tau))
\]
be the Jacobian matrix of $\Phi_{-t(\tau)}:\A^n_{\K}\to \A^n_{\K}$ at $P(\tau)$.
Differentiating $f=(f+th)\circ \Phi_{-t}$ with respect to $x$ at fixed
$t$, and then setting $x=P(\tau)$ and $t=t(\tau)$, gives
\begin{equation*}
 \nabla f(P(\tau))=v(\tau) K(\tau)
\end{equation*}
where
$$
 v(\tau)=\nabla f(Q(\tau))+t(\tau)\nabla h(Q(\tau)).
$$
Therefore
\begin{equation}\label{neqhahah}
 f_k(P(\tau))=\sum_{i=1}^n v_i(\tau)K_{ik}(\tau)
\end{equation}
for $k=1,\cdots,n$.

The $i$-th coordinate of $\Phi_{-t}$ is
$$
 \exp(-tV)(x_i)=\sum_{\ell=0}^{N_i}\frac{(-t)^\ell}{\ell!}V^\ell(x_i).
$$
It follows that
\begin{equation*}
 K_{ik}(\tau)=\sum_{\ell=0}^{N_i}
 \frac{(-t(\tau))^\ell}{\ell!}
 \partial_{x_k}\bigl(V^\ell(x_i)\bigr)(P(\tau)).
\end{equation*}
As $P_j(\tau)\in \K[\tau]$ for any $j$ and $t(\tau)=\tau^{-c\alpha}$, we have
\begin{equation}\label{neqK}
K_{ik}(\tau)\in \tau^{-cN_i\alpha}\cdot \K[\tau].
\end{equation}

Suppose first that $w_i>u$.  Since $h$ involves only variables of weight at most $u$ by condition (2), one has $h_i=0$.
Hence
$$
 v_i(\tau)=f_i(Q(\tau))=\tau^{c(d-w_i)}f_i(a).
$$
By \eqref{neqK}, for every $k$,
$$
 v_i(\tau)K_{ik}(\tau)\in \tau^{c(d-w_i-N_i\alpha)}\cdot \K[\tau].
$$
We claim that the exponent $c(d-w_i-N_i\alpha)$ is positive and hence $v_i(\tau)K_{ik}(\tau)\in \tau \cdot\K[\tau]$. Indeed, if $N_i=0$, we have 
$$d-w_i-N_i\alpha=d-w_i\geq w_i>0;$$
if $N_i>0$, we have
\begin{align*}
 d-w_i-N_i\alpha
 &=d-w_i-N_i\lambda+N_i\rho\\
 &\ge d-2w_i+N_i\rho>0.
 \end{align*}
Here we used $N_i\lambda\le w_i$, $w_i\le d/2$, $w_i>0$ and $\rho>0$.

Now suppose that $w_i\le u$. Condition (1) gives $N_i=0$ and hence 
$K_{ik}(\tau)\in \K[\tau]$ by \eqref{neqK}. Note that
 \begin{align*}
 f_i(Q(\tau))&=\tau^{c(d-w_i)}f_i(a), \text{ and }\\
 t(\tau)h_i(Q(\tau))
 &=\tau^{c(d-\rho-w_i-\alpha)}h_i(a)\\
 &=\tau^{c(d-w_i-\lambda)}h_i(a).
 \end{align*}
As $V\neq 0$, there is at least one $x_j$ with $V(x_j)\neq 0$
and hence $N_j\geq 1$. So
$$\lambda\le\frac{w_j}{N_j}\le w_j\le\frac{d}{2}.$$
Since $w_i\le u<d/2$, we have
\begin{equation}\label{eq:fixed-exponent}
 d-w_i-\lambda>0.
\end{equation}
Consequently $v_i(\tau)K_{ik}(\tau)$ also belongs to $\tau \cdot\K[\tau]$ in this case.

Therefore, $v_i(\tau)K_{ik}(\tau)\in \tau \cdot\K[\tau]$ for any $i,k$. Equation \eqref{neqhahah} gives
$$
 \nabla f(P_0)=0.
$$
This contradicts $P_0\neq 0$ and $\mathfrak m^N\subseteq J_f$ for some $N>0$.
\end{proof}

\begin{proof}[Proof of Theorem \ref{mthm}]
Write $f_i=\partial f/\partial x_i$ for any $i=1,\cdots,n$. If $f_i\notin \m$ for some $i$, as $f_i$ is weighted homogeneous, $f_i$ is a nonzero constant, so $S/J_f=0$ and there is nothing further to prove. Thus we may assume that $f_i\in \m$ for any $i=1,\cdots,n$. By Lemma \ref{lem:splitting}, $S/J_f$ is graded-isomorphic to $\K[z_1,\ldots,z_m]/J_h$. If $m=0$, then $S/J_f\cong\K$ and every derivation is zero. Otherwise, replacing $f$ by $h$ and relabeling the variables, we may assume that $\deg f\geq 2w_i$ for every $i=1,\cdots,n$.

Suppose, for a contradiction, that $S/J_f$ has a nonzero  derivation $\overline{D}$ of weight $-q<0$. By Lemma \ref{lem:lift}, $\overline{D}$ can be lifted to a nonzero derivation $D$ on $S$ of weight $-q$ such that $D(J_f)\subseteq J_f$. Write
\begin{equation*}
D=p_1\partial/\partial {x_1}+\cdots+p_n \partial/\partial {x_n},
\end{equation*}
where $p_i$ is either the zero polynomial or a weighted homogeneous polynomial with $\deg p_i-w_i=-q$ for $i=1,\cdots,n$. Set
$$g=D(f)=\sum_{i=1}^n p_i f_i.$$

We claim that $J_g\subseteq J_f$. Indeed, the claim follows from
\begin{align*}
\frac{\partial g}{\partial x_k}=& \sum_{i=1}^n \frac{\partial p_i  }{\partial x_k} f_i + \sum_{i=1}^n p_i\frac{\partial f_i  }{\partial x_k}
=\sum_{i=1}^n \frac{\partial p_i  }{\partial x_k} f_i + \sum_{i=1}^n p_i\frac{\partial f_k  }{\partial x_i}\\
=&\sum_{i=1}^n \frac{\partial p_i  }{\partial x_k} f_i + D(f_k) \in J_f
\end{align*}
for any $k=1,2,\cdots,n$.

If $g\neq 0$, then $g$ is a nonzero weighted homogeneous polynomial with $\deg g=\deg f-q<\deg f$. Note that each $f_i$ is weighted homogeneous of degree $\deg f-w_i\geq w_i>0$. Since $g\in J_f$ is nonzero, it follows that $\deg g>0$, contradicting Proposition \ref{prop:degree}.

Thus $g=\sum_i p_if_i=0$. By Lemma \ref{lem:koszul}, every $p_i$ belongs to $J_f$. It follows that $\overline{D}$ is a zero derivation on $S/J_f$, a contradiction.
\end{proof}

\end{document}